\titleformat{\chapter}[display]
{\normalfont\huge\bfseries}{\chaptertitlename\\thechapter}{20pt}{\Huge}
\titleformat{\subsubsection}[runin]
{\normalfont\normalsize\bfseries}{\thesubsubsection}{1em}{}
\titleformat{\paragraph}[runin]
{\normalfont\normalsize\bfseries}{\theparagraph}{1em}{}
\titleformat{\subparagraph}[runin]
{\normalfont\normalsize\bfseries}{\thesubparagraph}{1em}{}
\titlespacing*{\chapter} {0pt}{50pt}{40pt}
\titlespacing*{\section} {0pt}{3.5ex plus 1ex minus .2ex}{2.3ex plus .2ex}
\titlespacing*{\subsection} {0pt}{3.25ex plus 1ex minus .2ex}{1.5ex plus .2ex}
\titlespacing*{\subsubsection}{0pt}{3.25ex plus 1ex minus .2ex}{1.5ex plus .2ex}
\titlespacing*{\paragraph} {0pt}{3.25ex plus 1ex minus .2ex}{1em}
\titlespacing*{\subparagraph} {\parindent}{3.25ex plus 1ex minus .2ex}{1em}
\subjclass[2000]{Primary 16S35; Secondary 16W30}
\newtheorem{theorem}{Theorem}[section]
\newtheorem{lemma}[theorem]{Lemma}
\newtheorem{proposition}[theorem]{Proposition}
\theoremstyle{definition}
\def\iproof{\begin{proof}}
\def\fproof{\end{proof}}
\def\td{\widetilde}
\def\la{\langle}
\def\ra{\rangle}
\def\Cx{\mathbb{C}}
\def\iarray{\begin{array}}
\def\farray{\end{array}}
\def\ieqna{\begin{eqnarray*}}
\def\feqna{\end{eqnarray*}}
\def\ieqn{\begin{eqnarray}}
\def\feqn{\end{eqnarray}}
\def\ienu{\begin{enumerate}}
\def\fenu{\end{enumerate}}
\def\iitem{\begin{itemize}}
\def\fitem{\end{itemize}}
\def\td{\widetilde}
\def\la{\langle}
\def\ra{\rangle}
\begin{document}

\title[Solutions of a polynomial system related to the Jacobian Conjecture]{The Groebner basis and solution set of a polynomial system related to the Jacobian Conjecture}

\author{Christian Valqui}
\address{Christian Valqui\\
Pontificia Universidad Cat\'olica del Per\'u, Secci\'on Matem\'aticas, PUCP,
Av. Universitaria 1801, San Miguel, Lima 32, Per\'u.}

\address{Instituto de Matem\'atica y Ciencias Afines (IMCA) Calle Los Bi\'ologos 245.
Urb San C\'esar. La Molina, Lima 12, Per\'u.}
\email{cvalqui@pucp.edu.pe}

\thanks{Christian Valqui was supported by PUCP-DGI-2023-PI0991}

\author{Valeria Ram\'irez}
\address{Valeria Ram\'irez\\
Pontificia Universidad Cat\'olica del Per\'u, Secci\'on Matem\'aticas, PUCP,
Av. Universitaria 1801, San Miguel, Lima 32, Per\'u.}
\email{valeria.ramireza@pucp.edu.pe}

\subjclass[2010]{Primary 14R15; Secondary 13F20, 11B99}
\keywords{Jacobian conjecture, Groebner basis}
\begin{abstract}
We compute the Groebner basis of a system of polynomial equations related to the Jacobian conjecture, and describe completely the solution set.
\end{abstract}

\maketitle

\section{Introduction}
Let $K$ be a characteristic zero field and let $K[y]((x^{-1}))$ be the algebra of Laurent series
in $x^{-1}$ with  coefficients in $K[y]$.
We will start from the following theorem, proved in~\cite{GGV}*{Theorem 1.9}.
\begin{theorem}\label{principal} The Jacobian conjecture in dimension two is false if and only if
there exist

\begin{itemize}

\smallskip

\item[-] $P,Q\in K[x,y]$ and $C,F\in K[y]((x^{-1}))$,

\smallskip

\item[-] $n,m\in \mathds{N}$ such that $n\nmid m$ and $m\nmid n$,

\smallskip

\item[-] $\nu_i\in K$ ($i=0,\dots,m+n-2$) with $\nu_0=1$,

\smallskip

\end{itemize}
such that
\begin{itemize}

\smallskip

\item[-] $C$ has the form
$$
C = x + C_{-1}x^{-1}+ C_{-2}x^{-2} + \cdots \qquad\text{with each $C_{-i}\in K[y]$,}
$$

\smallskip

\item[-] $gr(C)=1$ and $gr(F)=2-n$, where $gr$ is the total degree,

\smallskip

\item[-] $F_+=x^{1-n}y$, where $F_+$ is the term of maximal degree in $x$ of $F$,

\smallskip

\item[-] $C^n=P$ and $Q=\sum_{i=0}^{m+n-2}\nu_i C^{m-i}+F$.

\smallskip

\end{itemize}
Furthermore, under these conditions $(P,Q)$ is a counterexample to the Jacobian conjecture.
\end{theorem}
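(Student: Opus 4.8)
The plan is to fix the Jacobian $\J(f,g)=f_xg_y-f_yg_x$ (extended to $K[y]((x^{-1}))$) together with the total-degree valuation $gr$, and to treat the two implications separately; the reverse implication, which also yields the final ``furthermore'' clause, is the short half. So assume we are given $P,Q,C,F,n,m$ and the $\nu_i$ as in the statement. The key identity is $\J(C,C^{k})=0$ for all $k$, since any power of $C$ is a function of $C$; hence $\J(C^n,C^{m-i})=nC^{n-1}\J(C,C^{m-i})=0$, and the entire principal part of $Q$ is annihilated:
\[
\J(P,Q)=\J\Bigl(C^{n},\sum_{i}\nu_iC^{m-i}+F\Bigr)=\J(C^{n},F)=nC^{n-1}\J(C,F).
\]
Now I would run a degree count. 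From $gr(\J(f,g))\le gr(f)+gr(g)-2$ we get $gr(\J(C,F))\le 1+(2-n)-2=1-n$, so $gr(\J(P,Q))\le(n-1)+(1-n)=0$. A one-line leading-term computation with $C_+=x$ and $F_+=x^{1-n}y$ gives $\J(C,F)_+=x^{1-n}$ and therefore $\J(P,Q)_+=n$, so in fact $gr(\J(P,Q))=0$ with nonzero leading coefficient. Since $P,Q\in K[x,y]$, the bracket $\J(P,Q)$ is an honest polynomial, and a polynomial of total degree $0$ is a constant; thus $\J(P,Q)=n\in K^\times$ and the Jacobian condition holds. Finally $\deg P=n$ and $\deg Q=m$ (read off from $C_+=x$ and $\nu_0=1$), and because $n\nmid m$ and $m\nmid n$ the Abhyankar--Moh / Jung--van der Kulk description of $\Aut K[x,y]$ rules out $(P,Q)$ being an automorphism. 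Hence $(P,Q)$ is a counterexample, which proves the reverse implication and the ``furthermore'' clause simultaneously.

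For the forward implication I would start from an arbitrary counterexample $(P_0,Q_0)$ and first apply the reduction theory behind \cite{GGV} (elementary and Abhyankar--Moh reductions) to replace it by a counterexample $(P,Q)$ in normalized form: the top $x$-coefficient of $P$ a nonzero constant, $\deg P=n$, $\deg Q=m$, and --- this being precisely the obstruction to the reductions terminating --- $n\nmid m$ and $m\nmid n$. One then sets $C=P^{1/n}$; the normalization is exactly what guarantees that this root lives in $K[y]((x^{-1}))$ and takes the form $C=x+C_{-1}x^{-1}+\cdots$ with $gr(C)=1$, so that $C^n=P$. Writing $D=\J(C,-)$ and expanding $Q$ in descending powers of $C$, I would collect the constant-coefficient powers into $\sum_i\nu_iC^{m-i}$ (with $\nu_0=1$) and call the remainder $F$. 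Since the Jacobian $\J(P,Q)=\lambda$ is a constant, $DF=\J(C,Q)=\tfrac{\lambda}{n}C^{1-n}$; comparing degrees gives $gr(F)=gr(DF)+1=2-n$, and solving $DF=\tfrac{\lambda}{n}C^{1-n}$ at top order (where $D\approx\partial_y$) gives $F_+=\tfrac{\lambda}{n}x^{1-n}y$, which the remaining scaling freedom normalizes to $x^{1-n}y$ with $\lambda=n$.

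I expect the forward implication to carry all the difficulty, and within it two points to be genuinely delicate. The first is the reduction step, which must simultaneously force the leading $x$-coefficient of $P$ to be constant and drive the pair into the regime $n\nmid m,\ m\nmid n$ while preserving the counterexample property. The second is the sharp control of the tail: proving that the collected coefficients $\nu_i$ really lie in $K$ and that $F$ has total degree exactly $2-n$ with leading term exactly $x^{1-n}y$, rather than anything larger. Both rest on the Newton-polygon and valuation machinery of \cite{GGV}, so in practice I would invoke those structural results rather than reprove them. The reverse direction, by contrast, reduces to the single bracket identity above, one degree count, and one appeal to the classification of plane automorphisms.
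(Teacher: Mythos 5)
The paper does not actually prove this theorem: it is quoted verbatim from \cite{GGV}*{Theorem 1.9}, so there is no in-paper argument to compare yours against. Judged on its own, your reverse implication (which also yields the ``furthermore'' clause) is correct and is the standard argument: $\J(P,Q)=nC^{n-1}\J(C,F)$ because powers of $C$ are killed by the bracket, the bound $gr(\J(f,g))\le gr(f)+gr(g)-2$ gives $gr(\J(P,Q))\le 0$, the leading-term computation with $C_+=x$ and $F_+=x^{1-n}y$ produces the constant $n$, and since $\J(P,Q)$ lies in $K[x,y]$ a polynomial of total degree $\le 0$ is the constant $n\in K^{\times}$; then $\deg P=n$, $\deg Q=m$ with $n\nmid m$ and $m\nmid n$ together with Jung--van der Kulk rule out an automorphism. (You implicitly use that $n\nmid m$ forces $n,m\ge 2$, so $gr(F)=2-n\le 0<m$ and the degrees of $P$ and $Q$ really are $n$ and $m$; worth one line.)

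The forward implication, however, is an outline rather than a proof: the two steps you yourself flag as delicate --- reducing an arbitrary counterexample to one with monic leading form, $\deg P=n$, $\deg Q=m$ and $n\nmid m$, $m\nmid n$; and showing that in the expansion $Q=\sum_i\nu_iC^{m-i}+F$ the coefficients $\nu_i$ lie in $K$ (not merely in $K[y]$) while $F$ has total degree exactly $2-n$ with leading term exactly $x^{1-n}y$ --- are essentially the whole content of the hard direction, and you discharge them by appealing to the ``Newton-polygon and valuation machinery of \cite{GGV}.'' As a self-contained proof this is a genuine gap, although it leaves you no worse off than the paper, which cites \cite{GGV} for the entire statement; if you keep the citation, identify precisely which results of \cite{GGV} supply each of those two steps.
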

In~\cite{GGV}, the authors consider the following slightly more general situation. Let $D$ be a
$K$-algebra (for example, in Theorem~\ref{principal} we have $D=K[y]$),
$n,m$ positive integers such that $n\nmid m$ and $n\nmid m$, $(\nu_i)_{1\le i\le n+m-2}$ a family of elements
in $K$ with $\nu_0=1$ and $F_{1-n}\in D$ (in Theorem~\ref{principal} we take $F_{1-n}=y$).
A Laurent series in $x^{-1}$ of the form
$$
C = x + C_{-1}x^{-1}+ C_{-2}x^{-2} + \cdots \qquad\text{with $C_{-i}\in D$,}
$$
is \textsl{a solution of the system} $S(n,m,(\nu_i),F_{1-n})$, if there exist
 $P,Q\in D[x]$ and $F \in D[[x^{-1}]]$, such that
\begin{align*}
& F = F_{1-n} x^{1-n} + F_{-n} x^{-n} + F_{-1-n} x^{-1-n} +\cdots,\quad\text{with $F_{1-n},F_{-n},\dots$
in $D$}\\
&P=C^n\qquad\text{and}\qquad Q=\sum_{i=0}^{m+n-2}\nu_i C^{m-i}+F.
\end{align*}

For example, if $n=3$, then
\begin{align*}
P(\text{\bf{x}})=C^3=&\text{\bf{ x}}^3+3 C_{-1}\text{\bf{ x}}+ 3C_{-2}+(3C_{-1}^2+3C_{-3})\text{\bf{ x}}^{-1}+
(6C_{-1} C_{-2}+4C_{-4})\text{\bf{ x}}^{-2}\\
&+(C_{-1}^3+3C_{-2}^2+6c_{-2}C_{-3}+3C_{-5})\text{\bf{ x}}^{-3}\\
&+ (3C_{-1}^2 C_{-2}+6 C_{-2} C_{-3}+6 C_{-1} C_{-4}+6C_{-6})\text{\bf{ x}}^{-4}\\
&+ (3C_{-1} C_{-2}^2+3 C_{-1}^2 C_{-3}+3C_{-3}^2+6 C_{-2} C_{-4}+6 C_{-1} C_{-5}+6C_{-7})\text{\bf{ x}}^{-5}\\
& +\dots
\end{align*}
and the condition $C^3\in K[x]$ translates into the following conditions on $C_{-k}$:
\begin{align*}
0=(C^3)_{-1}=& \ 3C_{-1}^2+3C_{-3},\\
0=(C^3)_{-2}=& \ 6C_{-1} C_{-2}+4C_{-4},\\
0=(C^3)_{-3}=& \ C_{-1}^3+3C_{-2}^2+6c_{-2}C_{-3}+3C_{-5},\\
0=(C^3)_{-4}=& \ 3C_{-1}^2 C_{-2}+6 C_{-2} C_{-3}+6 C_{-1} C_{-4}+6C_{-6},\\
0=(C^3)_{-5}=& \ 3C_{-1} C_{-2}^2+3 C_{-1}^2 C_{-3}+3C_{-3}^2+6 C_{-2} C_{-4}+6 C_{-1} C_{-5}+6C_{-7},\\
\vdots
\end{align*}
In general, the condition $P(x)=C^n\in K[x]$ yields equations $(C^n)_{-k}=0$, whereas the condition
$Q(x)=\sum_{i=0}^{m+n-2}\nu_i C^{m-i}+F\in K[x]$ gives us the equations
$\left(\sum_{i=0}^{m+n-2}\nu_i C^{m-i}+F\right)_{-k}=0$, where we note that $F_{-k}=0$ for $k=1,\dots,n-2$.

It is easy to see (e.g.~\cite{GGV}*{Remark 1.13}) that the first $m+n-2$ coefficients determine
the others, i.e.,
the coefficients $C_{-1},\dots,C_{-m-n+2}$ determine univocally the coefficients $C_{-k}$ for $k>m+n-2$.
Moreover, the $F_{-k}$ for $k>n-1$ depend only on $F_{1-n}$ and $C$.
Consequently, having a solution $C$ to the system $S(n,m,(\nu_i),F_{1-n})$
is the same as having a solution  $(C_{-1},\dots,C_{-m-n+2})$ to the system
\begin{equation}
\begin{aligned}
&E_k:=(C^n)_{-k}  = 0, && \text{for $k=1,\dots, m-1$,}\\
&E_{m-1+k}:=\left(\sum_{i=0}^{m+n-2}\nu_i C^{m-i}\right)_{-k}  = 0, &&\text{for $k=1,\dots,n-2$,}\\
&E_{m+n-2}:=\left(\sum_{i=0}^{m+n-2}\nu_i C^{m-i}\right)_{1-n} + F_{1-n} = 0,
\end{aligned}\label{sistema de ecuaciones}
\end{equation}
with $m+n-2$ equations $E_k$ and $m+n-2$ unknowns $C_{-k}$.

In order to understand the solution set of this system, it would be very helpful to find a Groebner basis
for the ideal generated by the polynomials $E_k$ in $D[C_{-1},\dots,C_{m+n-2}]$.
In this paper we compute such a Groebner basis of~\eqref{sistema de ecuaciones} in a very particular case:
we assume
$n=3$, $m=3r+1$  or $m=3r+2$ for some integer $r>0$, and $\nu_i=0$
for $i>0$. Moreover we consider $D=\mathds{C}[y]$ and $F_{1-n}=y$, as in Theorem~\ref{principal}.
\newpage
\section{Computation of a Groebner basis for $I_{m-1}$}
\setcounter{equation}{0}
Assume
$n=3$, $3\nmid m>3 $ and $\nu_i=0$
for $i>0$. Set also $D=\mathds{C}[y]$ and $F_{1-n}=y$.

Then the system~\eqref{sistema de ecuaciones} reads
\begin{eqnarray}
\begin{array}{rcl}
E_i&=&\left\{\iarray{ll}
(C^3)_{-i},& i=1,\ldots,m-1\\
(C^{m})_{-1},& i=m,\\
(C^{m})_{-2}+y,& i=m+1,\\
\farray\right.\\
\end{array}\label{eqGGV}
\end{eqnarray}
where $(C^2)_{-i}$ denotes the coefficient of $x^{-i}$ in the Laurent series $C^3$. Explicitly, the
polynomials $E_i$ are given by
\begin{eqnarray}
\begin{array}{rl}
E_1&= \ 3C_{-1}^2+3C_{-3},\\
E_2&= \ 6C_{-1} C_{-2}+3C_{-4},\\
E_3&= \ C_{-1}^3+3C_{-2}^2+6C_{-1}C_{-3}+3C_{-5},\\
E_4&= \ 3C_{-1}^2 C_{-2}+6 C_{-2} C_{-3}+6 C_{-1} C_{-4}+3C_{-6},\\
E_5&= \ 3C_{-1} C_{-2}^2+3 C_{-1}^2 C_{-3}+3C_{-3}^2+6 C_{-2} C_{-4}+6 C_{-1} C_{-5}+3C_{-7},\\
& \vdots\\
E_{m-1}&=(C^3)_{1-m},\\
E_{m}&=(C^m)_{-1},\\
E_{m+1}&=(C^m)_{-2}+y,\\
\end{array}
\label{eqGGV1}
\end{eqnarray}

Each $E_i$ is a polynomial in the ring $\Cx[C_{-1},C_{-2},\ldots,C_{m+1},y]$, and the $m+1$ polynomials
yield the ideal
\ieqna
I&=&\la E_1,\ldots,E_{m},E_{m+1}\ra.
\feqna
Our goal  is to find a Groebner basis for the ideal $I$, but we find it nearly explicit only for
$I_{m-1}:=\la
E_1,E_2,\ldots,E_{m-2},E_{m-1}\ra$.  For this we note that the equations are homogeneous, for the weight obtained by setting
$$
w(C_{-i})=i+1,\quad \text{and}\quad w(y)=m+n-1=m+2.
$$
We consider $y$ as a variable, so the equations remain homogeneous.
Then
$$
w(E_k)=k+3, \text{ for $k=1,\dots, m-1$}, \quad w(E_{m})=m+1 \quad \text{and} \quad w(E_{m+1})=m+2.
$$

Note that for $k=1\dots, m-1$ we have
\begin{equation}\label{desarrollo de Ek}
        E_{k} := 3 \Big( \sum_{\substack{i=-1\\ 3i\ne k}}^{[[\frac{k+1}{2}]]} C_{-i}^{2} C_{-(k - 2i)} \Big) + 6 \Big(\sum_{\substack{ 0<i <j \\ i+j = k+1}} C_{-i} C_{-j} \Big) + 6 \Big(\sum_{\substack{ 0< i <j < l \\ i+j + l  = k}} C_{-i} C_{-j} C_{-l} \Big) + \varepsilon (C_{-\frac{k}{3}})^{3},
        \end{equation}
where    $\varepsilon = \begin{cases}
          1 , \quad 3 | k \\
          0,  \quad 3 \nmid k
        \end{cases}$.
Note that $C_1=1$ and $C_0=0$, and so
\begin{equation}\label{Ek desde Ck+2}
3\sum_{i=-1}^{[[\frac{k+1}{2}]]} C_{-i}^{2} C_{-(k - 2i)}=3 C_{k+2}+3\sum_{i=1}^{[[\frac{k+1}{2}]]} C_{-i}^{2} C_{-(k - 2i)}
\end{equation}
In order to compute a Groebner basis we will consider the degree reverse lexicographic monomial order, but for the degree given by the above mentioned weight. This means that the monomial order is given by the matrix
$$\text{wmat}=
\begin{pmatrix}
  m+2 & m+1 & m & \dots & 4 & 3 & 2 & m+2 \\
  0 & 0 & 0 & \dots & 0 & 0 & 0 & -1 \\
  0 & 0 & 0 & \dots & 0 & 0 & -1 & 0 \\
  0 & 0 & 0 & \dots & 0 & -1& 0  & 0 \\
  \vdots & \vdots & \vdots & \ddots & \vdots & \vdots & \vdots &\vdots \\
  0 & 0 & -1 & \dots & 0 & 0 & 0 & 0\\
  0 & -1 & 0 & \dots & 0 & 0 & 0 & 0
\end{pmatrix},
$$
on the variables $C_{-(m+1)},C_{-m},C_{-(m-1)},\dots, C_{-3},C_{-2}, C_{-1}, y$.
 We first compute the reduced Groebner basis
$(\td{E}_1,\td{E}_2,\ldots,\td{E}_{m-1})$ for the ideal $I_{m-1}:=\la
E_1,E_2,\ldots,E_{m-2},E_{m-1}\ra$.

\begin{proposition}
  The set $\{E_1,\dots,E_{m-1}\}$ is a Groebner basis of $I_{m-1}$.
  The reduced Groebner basis of $I_{m-1}$ is given by polynomials $\td{E}_k$ for
  $k=1,\dots,m-1$, each of the form
  $$
\td{E}_k=C_{-(k+2)}+R_k(C_{-1},C_{-2}),
$$
where $R_k(C_{-1},C_{-2})\in \Bbb{Q}[C_{-1},C_{-2}]$ is an homogeneous polynomial in the variables $C_{-1}$ and $C_{-2}$ of weight $w(\td{E}_k)=w(E_k)=k+3$.
\end{proposition}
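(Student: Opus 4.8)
The plan is to pin down the leading terms of the $E_k$ and then let Buchberger's coprimality criterion do the rest. First I would show that, for the weighted degree reverse-lexicographic order encoded by $\mathrm{wmat}$, the leading monomial of $E_k$ is $C_{-(k+2)}$, with leading coefficient $3$ arising from the term isolated on the right-hand side of~\eqref{Ek desde Ck+2} (the summand $i=-1$, using $C_1=1$). Every monomial occurring in $E_k$ has the same weight $k+3$, so the weight row of $\mathrm{wmat}$ never breaks a tie and the ordering is decided entirely by the reverse-lexicographic rows, which read off $-(\text{exponent})$ of $y, C_{-1}, C_{-2}, \dots$ in increasing index. By inspecting~\eqref{desarrollo de Ek}, every monomial $\mu$ of $E_k$ other than $C_{-(k+2)}$ is a product of at least two factors $C_{-j}$ with indices $j\le k$; hence $\mu$ has exponent $0$ in $C_{-(k+1)}$ and $C_{-(k+2)}$ and a strictly positive exponent in some $C_{-j_0}$ with $j_0\le k$. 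Reading the rows in order, $C_{-(k+2)}$ and $\mu$ agree (both $0$) on $y,C_{-1},\dots,C_{-(j_0-1)}$, while at the row indexed by $C_{-j_0}$ the monomial $C_{-(k+2)}$ contributes $0$ and $\mu$ contributes $-\mu_{C_{-j_0}}<0$; therefore $C_{-(k+2)}>\mu$. This gives $\mathrm{LM}(E_k)=C_{-(k+2)}$ for every $k=1,\dots,m-1$ (and no cancellation can remove it, as it is the unique monomial of total degree $1$ in $E_k$).

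Next I would deduce the Groebner basis property essentially for free. The leading monomials $\mathrm{LM}(E_k)=C_{-(k+2)}$, $k=1,\dots,m-1$, are the distinct single variables $C_{-3},\dots,C_{-(m+1)}$, hence pairwise coprime. By Buchberger's product (coprimality) criterion, the $S$-polynomial of any pair $E_j,E_k$ reduces to $0$ modulo $\{E_1,\dots,E_{m-1}\}$, so Buchberger's criterion shows at once that $\{E_1,\dots,E_{m-1}\}$ is a Groebner basis of $I_{m-1}$.

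For the reduced basis I would normalize $\hat{E}_k:=\tfrac13 E_k$, so that $\mathrm{LM}(\hat{E}_k)=C_{-(k+2)}$ has coefficient $1$. The initial ideal is then $\mathrm{in}(I_{m-1})=\langle C_{-3},\dots,C_{-(m+1)}\rangle$, so the standard monomials are the products of powers of $C_{-1},C_{-2}$ and $y$; since none of $E_1,\dots,E_{m-1}$ involves $y$, the reduction process never introduces $y$. Reducing each $\hat{E}_k$ to normal form modulo the others leaves the leading term $C_{-(k+2)}$ untouched (it is divisible by no other leading monomial) and rewrites every tail monomial as a standard $y$-free monomial, yielding $\tilde{E}_k=C_{-(k+2)}+R_k(C_{-1},C_{-2})$. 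Because the order is defined by the weight and each $E_k$ is weighted-homogeneous of weight $k+3$, every reduction step preserves this weight, so $\tilde{E}_k$, and hence $R_k$, is homogeneous of weight $k+3$; all arithmetic stays over $\mathbb{Q}$, giving $R_k\in\mathbb{Q}[C_{-1},C_{-2}]$.

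I expect the only genuinely delicate point to be the leading-term identification in the first paragraph: one must handle the reverse-lexicographic tie-break correctly (in particular noting that the argument works even when $k=m-1$, where $C_{-(m+1)}$ is the single variable without its own $-1$ row, since its exponent vector is still $0$ on all variables read before $C_{-j_0}$). Once the leading monomials are pinned down as distinct variables, both the Groebner basis property and the precise shape of the reduced basis follow formally.
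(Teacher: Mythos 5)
Your proof is correct and follows essentially the same route as the paper: identify $\mathrm{LT}(E_k)=3C_{-(k+2)}$, invoke Buchberger's coprimality criterion (Proposition 2.9.4 of \cite{CLO}) since the leading monomials are distinct single variables, and then observe that reduction to the reduced basis leaves remainders only in $C_{-1},C_{-2}$ while preserving the weighted homogeneity. The only difference is that you spell out the degree-reverse-lexicographic tie-break verifying $\mathrm{LM}(E_k)=C_{-(k+2)}$, which the paper leaves implicit.
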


\begin{proof}
By~\eqref{desarrollo de Ek} and~\eqref{Ek desde Ck+2} we know that $E_k$ is of the form
$$
E_k=3C_{-k-2}+T(C_{-1},\dots,C_{-k},\quad \text{for $k=1,\dots,m-1$},
$$
where $T$ is a polynomial in the variables $C_{-1},\dots,C_{-k}$.
Then by  Proposition 2.9.4 of~\cite{CLO}, since
$$
LCM(LT(E_i)/3,LT(E_j)/3)=LCM(C_{-i-2},C_{-j-2})= C_{-i-2}C_{-j-2}=(LT(E_i)/3)(LT(E_j)/3),
$$
we have $S(E_i,E_j)\longrightarrow_G 0$, and so, by Theorem 2.9.3 of~\cite{CLO}, the set $G=\{E_1/3,\dots,E_{m-1}/3\}$ is a Groebner basis of $I_{m-1}$.
One verifies directly that it is a minimal Groebner basis, according to Definition 2.7.4 of~\cite{CLO}. If we apply the process described in the proof of~\cite{CLO}*{Proposition 2.7.6} to the Groebner basis $G=\{E_1/3,\dots, E_{m-1}/3\}$ we obtain that
$$
\td{E}_1=\overline{E_1/3}^{G\setminus\{\frac {E_1}{3}\}}=E_1/3\quad\text{and}\quad \td{E}_1=\overline{E_2/3}^{G\setminus\{\frac {E_2}{3}\}}=E_2/3.
$$
Moreover, for $k=3,\dots,m-1$, we define $G_k=\{\td{E_1},\dots,\td{E_{k-1}},E_k,\dots,E_{m-1}\}$ and then
$$
\td{E}_k=\overline{E_k}^{G_{k}\setminus E_k},
$$
Clearly the remainder can have only the variables $C_{-1}$ and $C_{-2}$, hence $\td{E}_k$ is of the form
  $$
\td{E}_k=C_{-(k+2)}+R_k(C_{-1},C_{-2}),
$$
as desired.

\end{proof}

Although we have no explicit formula for $R_k(C_{-1},C_{-2})$, we can compute it for small $k$.

\begin{align*}
\tilde{E}_1&=C_{-3}+C_{-1}^2,\\
\tilde{E}_2&=C_{-4}+2C_{-1}C_{-2},\\
\tilde{E}_3&=C_{-5}+C_{-2}^2-\frac 53 C_{-1}^3,\\
\tilde{E}_4&=C_{-6}-5C_{-1}^2C_{-2},\\
\tilde{E}_5&=C_{-7}+\frac{10}{3}C_{-1}^4-5C_{-1}C_{-2}^2.\\
\end{align*}

Dividing the polynomials $E_m$ and $E_{m+1}$  by the polynomials $\{\td{E}_{m-1},\dots,\td{E}_2,\td{E}_{1}\}$ with respect to the given order, we obtain
$$
\overline{\frac{E_m}{3}}^{G_m\setminus\{\frac {E_m}{3}\}}=\td{E}_{m}=R_m(C_{-1},C_{-2})\quad \text{and} \quad
\overline{\frac{E_{m+1}}{3}}^{G_m\setminus\{\frac {E_{m+1}}{3}\}}=\td{E}_{m+1}=y+R_{m+1}(C_{-1},C_{-2}),
$$
where $R_m(C_{-1},C_{-2}),R_{m+1}(C_{-1},C_{-2})\in \Bbb{Q}[C_{-1},C_{-2}]$ are homogeneous polynomials such that
 $w(\td{E}_m)=w(E_m)=m+1$ and $w(\td{E}_{m+1})=w(E_{m+1})=m+2$.

Although we don't give an explicit description of the Groebner Basis of the whole system, in the next section we show how to determine the solution set of the polynomial system, using that
$$
I=\langle E_1,E_2,\dots, E_m, E_{m+1} \rangle= \langle \td{E}_1,\td{E}_2,\ldots,\td{E}_{m},\td{E}_{m+1} \rangle.
$$
\section{The solution set of the system of polynomial equations}
In this section we analyze the solutions of the system of equations. Note that the partial system $I_{m-1}$ shows that the values of
$C_{-1}$ and $C_{-2}$ determine univocally the values of $C_{-k}$ for $k>2$. Moreover, $C_{-1}$ and $C_{-2}$ can be computed using the following two equations:
\begin{equation} \label{primera ecuacion}
\td{E}_{m} = R_{m}(C_{-1},C_{-2})=0
\end{equation}
and
\begin{equation} \label{segunda ecuacion}
\td{E}_{m+1} = y + R_{m+1}(C_{-1},C_{-2})=0,
\end{equation}
where $R_{m}(C_{-1},C_{-2}), R_{m+1}(C_{-1},C_{-2}) \in \mathbb{Q}[C_{-1},C_{-2}]$ are homogeneous polynomials with respect to the weight considered
before, i.e. $w(C_{-1})=2$, $w(C_{-2})=3$. Moreover $w(\td{E}_m)= m+n-2= m+1$ and $w(\td{E}_{m+1})= m+n-1= m+2$. Then~\eqref{primera ecuacion} and~\eqref{segunda ecuacion} read
\begin{equation}\label{sumas homogeneas 1}
\td{E}_m=\sum_{2i + 3j = m+1} \lambda_{m}^{ij} C_{-1}^{i} C_{-2}^{j}
\end{equation}
and
\begin{equation}\label{sumas homogeneas 2}
\td{E}_{m+1}=y+\sum_{2i + 3j = m+2} \lambda_{m+1}^{ij} C_{-1}^{i} C_{-2}^{j},
\end{equation}
for some constants $\lambda_m^{ij}, \lambda_{m+1}^{ij}\in K$.
 By~\eqref{sumas homogeneas 2} the two variables cannot be zero at the same time. We compute first the solutions in the cases where one of the
variables is zero.\\

\noindent {\bf FIRST CASE}:  $C_{-1}=0$ and $C_{-2} \neq 0$. \\

In this case the only term surviving in~\eqref{sumas homogeneas 1}
is
$$
0=\td{E}_m=\lambda_m^{0j} C_{-2}^j,
$$
with $3j=m+1$. So necessarily
\begin{equation}\label{condition innecesaria}
\lambda_m^{0,(m+1)/3}=0\quad\text{if} \quad 3|m+1.
\end{equation}
Similarly, the only term surviving in the sum~\eqref{sumas homogeneas 2} has $i=0$,
and so we obtain
$$
0=\td{E}_{m+1}=y+ \lambda_{m+1}^{0j} C_{-2}^j\quad\text{with $3j=m+2$.}
$$
 Since $y\ne 0$, necessarily $ \lambda_{m+1}^{0j}\ne 0$ for $3j=m+2$, and so $3|m+2$, i.e. $m\equiv 1\mod 3$. This shows that
the condition~\eqref{condition innecesaria} is trivially satisfied.
\begin{lemma}
If $3|m+2$, and $C_{-1}=0$, then $\lambda_{m+1}^{0j}\ne 0$ for $3j=m+2$.
\end{lemma}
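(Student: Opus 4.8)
The plan is to use the hypothesis $C_{-1}=0$ to collapse $C$ into a one--variable generating series in $C_{-2}$, compute $C^m$ in closed form, and identify $\lambda_{m+1}^{0j}$ with a generalized binomial coefficient whose nonvanishing follows from $3\nmid m$. First I would determine the shape of $C$ after the relations of $I_{m-1}$ are imposed and $C_{-1}=0$ is substituted. By the Proposition, $C_{-(k+2)}=-R_k(C_{-1},C_{-2})$ is homogeneous of weight $k+3$ for $w(C_{-1})=2$, $w(C_{-2})=3$; setting $C_{-1}=0$ leaves a homogeneous polynomial in the single variable $C_{-2}$, which is nonzero only when $3\mid k+3$. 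Hence $C_{-(3\ell-1)}=\beta_\ell C_{-2}^{\ell}$ for constants $\beta_\ell$ (with $\beta_1=1$) while all other $C_{-i}$ vanish, so that $C=x\,g(C_{-2}\,x^{-3})$ with $g(z)=1+\sum_{\ell\ge1}\beta_\ell z^{\ell}$.

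Next I would translate the equations of $I_{m-1}$ into a single congruence for $g$. Writing $h(z)=g(z)^3$, one has $C^3=\sum_k h_k\,C_{-2}^{\,k}\,x^{3-3k}$, so $(C^3)_{-3(k-1)}=h_k\,C_{-2}^{\,k}$; together with $h_0=1$ and $h_1=3$, the vanishing $(C^3)_{-j'}=0$ imposed for $j'=1,\dots,m-1$ amounts exactly to $h_k=0$ for $k=2,\dots,(m+2)/3$, that is,
$$
g(z)^3\equiv 1+3z \pmod{z^{(m+2)/3+1}}.
$$

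Then I would extract the desired coefficient. Since only $C_{-1},\dots,C_{-(m+1)}$ occur in $(C^m)_{-2}$, the closed form above suffices for the computation, and $C^m=x^m\,g(C_{-2}\,x^{-3})^m=x^m\,(g^3)^{m/3}$. Picking out the coefficient of $x^{-2}$ forces the exponent $k=(m+2)/3=j$ --- this is exactly where the hypothesis $3\mid m+2$ is used --- and the congruence above yields
$$
(C^m)_{-2}=\binom{m/3}{\,j\,}\,3^{\,j}\,C_{-2}^{\,j},\qquad\text{hence}\qquad \lambda_{m+1}^{0j}=\binom{m/3}{\,j\,}\,3^{\,j}.
$$

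The conclusion is then immediate: because $3\nmid m$, the number $m/3$ is not an integer, so none of the factors $m/3-i$ for $i=0,\dots,j-1$ in $\binom{m/3}{j}=\frac{(m/3)(m/3-1)\cdots(m/3-j+1)}{j!}$ can vanish, and therefore $\lambda_{m+1}^{0j}\ne0$. I expect the only delicate point to be justifying that the fractional power respects the truncation, i.e.\ that $g^m=(g^3)^{m/3}\equiv(1+3z)^{m/3}$ modulo $z^{j+1}$; this is standard for series with constant term $1$ via $u^{m/3}=\exp(\tfrac{m}{3}\log u)$, writing $g^3=(1+3z)(1+w)$ with $w=O(z^{j+1})$, but I would state it carefully so that the truncation order genuinely captures the $z^{j}$ coefficient.
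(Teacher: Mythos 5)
Your proposal is correct and follows essentially the same route as the paper: both reduce to $C^3=x^3+3C_{-2}$ (you phrase this as $g(z)^3\equiv 1+3z$ modulo the relevant truncation), apply Newton's generalized binomial theorem to obtain $\lambda_{m+1}^{0j}=\binom{m/3}{j}3^j$, and conclude nonvanishing from $3\nmid m$. Your extra care about the truncation order is a welcome refinement of a step the paper asserts without justification, but it is not a different argument.
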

\begin{proof}
  It is easy to check that $P=x^3+3C_{-2}$, and then, by Newtons binomial theorem we have
\begin{equation}\label{Newton 1}
  C^m=P^{m/3}=\sum_{k=0}^\infty \binom{m/3}{k}(3C_{-2})^k(x^3)^{\frac m3-k}.
\end{equation}
Thus $\lambda_{m+1}^{0j}C_{-2}^j=(C^m)_{-2}$ is the coefficient of $x^{-2}=(x^3)^{\frac m3-j}$, since $m=3j-2$.
Then
$$
\lambda_{m+1}^{0j}=\binom{m/3}{j}3^j\ne 0,
$$
as desired.
\end{proof}

Thus we have proved the following proposition.
\begin{proposition}
If $(C_{-1},C_{-2},\dots,C_{-(m+1)})$ is a solution of the system~\eqref{eqGGV}, with  $C_{-1}=0$ and $C_{-2}\ne 0$, then
\begin{itemize}
\item $m\equiv 1\mod 3$,
\item $\lambda_{m+1}^{0j}\ne 0$ for $j:=\frac{m+2}{3}$,
\item There are $j$ solutions of the system~\eqref{eqGGV} in $K[y^{1/j}]$, given by
$$
C_{-1}=0,\quad C_{-2} = \left(\frac{-y}{\lambda^{0j}_{m+1}} \right)^{\frac{1}{j}}\quad\text{and}\quad
C_{-k} = - R_{k-2}(C_{-1},C_{-2}) \quad \text{for}\quad 3\le k \leq m+1.
$$
\end{itemize}
\end{proposition}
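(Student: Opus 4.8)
The plan is to observe that the first two bullets have essentially been established in the discussion preceding the statement, and then to assemble the third bullet from the structure of the reduced Gr\"obner basis. For the first bullet, I would recall that in the FIRST CASE equation~\eqref{sumas homogeneas 2} collapses to $y+\lambda_{m+1}^{0j}C_{-2}^j=0$ with $3j=m+2$; since $y\ne 0$ this forces $\lambda_{m+1}^{0j}\ne 0$ and $3\mid m+2$, i.e. $m\equiv 1\bmod 3$. The second bullet is precisely the content of the Lemma, which computes $\lambda_{m+1}^{0j}=\binom{m/3}{j}3^j\ne 0$ from the Newton binomial expansion of $P^{m/3}$ recorded in~\eqref{Newton 1}. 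So the proof of these two items is just a matter of citing the immediately preceding results.

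The main work is the third bullet, and the key point I would emphasize is that equation~\eqref{primera ecuacion} imposes no constraint once $C_{-1}=0$ and $m\equiv 1\bmod 3$. Indeed, $R_m(C_{-1},C_{-2})$ is homogeneous of weight $m+1$ for the weights $w(C_{-1})=2$, $w(C_{-2})=3$; setting $C_{-1}=0$ leaves only a putative monomial $\lambda_m^{0j}C_{-2}^j$ with $3j=m+1$, which does not exist because $3\nmid m+1$ (as $m\equiv 1\bmod 3$). Hence $R_m(0,C_{-2})=0$ identically, condition~\eqref{condition innecesaria} is vacuous, and the only surviving equation on $(C_{-1},C_{-2})$ is~\eqref{segunda ecuacion}.

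Next I would solve~\eqref{segunda ecuacion}: rearranging $y+\lambda_{m+1}^{0j}C_{-2}^j=0$ and using $\lambda_{m+1}^{0j}\ne 0$ gives $C_{-2}^j=-y/\lambda_{m+1}^{0j}$, whose solutions are the $j$ distinct values $C_{-2}=\zeta^\ell\big(-y/\lambda_{m+1}^{0j}\big)^{1/j}$ for $\ell=0,\dots,j-1$, where $\zeta$ is a primitive $j$-th root of unity (distinctness holds in characteristic zero, and all $j$ roots of unity lie in $K=\C$, so every such $C_{-2}$ is a $K$-multiple of $y^{1/j}$ and lies in $K[y^{1/j}]$). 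Finally, since $I=\la\td{E}_1,\dots,\td{E}_{m+1}\ra$ and the reduced basis elements satisfy $\td{E}_{k-2}=C_{-k}+R_{k-2}(C_{-1},C_{-2})$, the remaining coordinates are forced to be $C_{-k}=-R_{k-2}(0,C_{-2})$ for $3\le k\le m+1$; these are polynomials in $C_{-2}$ with rational coefficients and hence again lie in $K[y^{1/j}]$. This produces exactly $j$ solutions of~\eqref{eqGGV} of the stated shape.

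The only step I expect to require genuine care is the vanishing of~\eqref{primera ecuacion}: one must argue from the weighted homogeneity of $R_m$ that no monomial in $C_{-2}$ alone can carry weight $m+1$ when $m\equiv 1\bmod 3$, so that a single congruence on $m$ simultaneously guarantees the solvability of~\eqref{segunda ecuacion} and the triviality of~\eqref{primera ecuacion}. Everything else is a direct read-off from the reduced Gr\"obner basis and the Lemma.
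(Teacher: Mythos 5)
Your proposal is correct and follows essentially the same route as the paper: the first two bullets come from the collapse of~\eqref{sumas homogeneas 2} together with the Lemma's binomial computation of $\lambda_{m+1}^{0j}$, and the vacuity of~\eqref{primera ecuacion} is exactly the paper's observation that condition~\eqref{condition innecesaria} is trivially satisfied because $3\mid m+2$ excludes $3\mid m+1$. You merely spell out more explicitly than the paper the count of the $j$ roots of $C_{-2}^j=-y/\lambda_{m+1}^{0j}$ and the membership of all coordinates in $K[y^{1/j}]$, which is a welcome but not substantively different addition.
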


\noindent {\bf SECOND CASE}:  $C_{-1}\ne 0$ and $C_{-2} = 0$. \\

\noindent In this case the only term surviving in~\eqref{sumas homogeneas 1}
is
$$
0=\td{E}_m=\lambda_m^{i0} C_{-1}^i,
$$
with $2i=m+1$. So necessarily
\begin{equation}\label{condition innecesaria}
\lambda_m^{(m+1)/2,0}=0\quad\text{if} \quad 2|m+1.
\end{equation}
Similarly, the only term surviving in the sum~\eqref{sumas homogeneas 2} has $j=0$,
and so we obtain
$$
0=\td{E}_{m+1}=y+ \lambda_{m+1}^{i0} C_{-1}^i\quad\text{with $2i=m+2$.}
$$
 Since $y\ne 0$, necessarily $ \lambda_{m+1}^{i0}\ne 0$ for $2i=m+2$, and so $2|m+2$, i.e. $m$ is even. This shows that
the condition~\eqref{condition innecesaria} is trivially satisfied.
\begin{lemma}
If $2|m$ and $C_{-2}=0$, then $\lambda_{m+1}^{i0}\ne 0$ for $2i=m+2$.
\end{lemma}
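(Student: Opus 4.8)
The plan is to mirror the argument of the preceding lemma, exploiting that the vanishing of $C_{-2}$ forces the cube $P=C^3$ to collapse to a two--term polynomial. First I would show that when $C_{-1}\ne 0$ and $C_{-2}=0$ one has $P=x^3+3C_{-1}x$. Since $P$ is monic of $x$--degree $3$, write $P=x^3+a_2x^2+a_1x+a_0$; then $C$ is the unique cube root of $P$ of the form $x+C_{-1}x^{-1}+C_{-2}x^{-2}+\cdots$. Expanding
\begin{equation*}
C=x\bigl(1+a_2x^{-1}+a_1x^{-2}+a_0x^{-3}\bigr)^{1/3}
\end{equation*}
and matching the coefficients of $x^{0}$, $x^{-1}$ and $x^{-2}$ against $C_0=0$, $C_{-1}$ and $C_{-2}=0$ yields $a_2=0$, $a_1=3C_{-1}$ and $a_0=0$ in turn, so that $P=x^3+3C_{-1}x$. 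This is the exact analogue of the identity $P=x^3+3C_{-2}$ used before, with the roles of $C_{-1}$ and $C_{-2}$ interchanged.

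Next I would compute $(C^m)_{-2}$ by Newton's binomial theorem exactly as in~\eqref{Newton 1}, now in the form
\begin{equation*}
C^m=P^{m/3}=x^m\bigl(1+3C_{-1}x^{-2}\bigr)^{m/3}=\sum_{k=0}^{\infty}\binom{m/3}{k}(3C_{-1})^k\,x^{m-2k}.
\end{equation*}
The coefficient of $x^{-2}$ corresponds to $m-2k=-2$, i.e. $k=i$ with $2i=m+2$, so that $\lambda_{m+1}^{i0}C_{-1}^{\,i}=(C^m)_{-2}=\binom{m/3}{i}(3C_{-1})^{i}$, and therefore $\lambda_{m+1}^{i0}=\binom{m/3}{i}3^{\,i}$.

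Finally, to conclude $\lambda_{m+1}^{i0}\ne 0$ it suffices to note $3^{\,i}\ne 0$ together with $\binom{m/3}{i}\ne 0$; the latter holds because $\binom{m/3}{i}=\frac{1}{i!}\prod_{t=0}^{i-1}(m/3-t)$ and, since $3\nmid m$, the number $m/3$ is not an integer, so no factor $m/3-t$ with $t\in\{0,\dots,i-1\}$ can vanish. The only genuinely delicate step is the first one, namely verifying that $C_{-2}=0$ really forces the two--term shape $P=x^3+3C_{-1}x$; once that is in hand the rest is a verbatim transcription of the previous lemma.
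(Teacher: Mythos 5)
Your proposal is correct and follows essentially the same route as the paper: establish $P=x^3+3C_{-1}x$, expand $C^m=P^{m/3}$ by Newton's binomial theorem, read off the coefficient of $x^{-2}$ to get $\lambda_{m+1}^{i0}=\binom{m/3}{i}3^i$, and use $3\nmid m$ to see this is nonzero. The only difference is that you spell out two steps the paper leaves implicit (the coefficient-matching that forces $P=x^3+3C_{-1}x$, and the reason the generalized binomial coefficient cannot vanish), which is a welcome addition but not a different argument.
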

\begin{proof}
  It is easy to check that $P=x^3+3xC_{-1}$, and then, by Newtons binomial theorem we have
\begin{equation}\label{Newton 1}
  C^m=P^{m/3}=\sum_{k=0}^\infty \binom{m/3}{k}(3xC_{-1})^k(x^3)^{\frac m3-k}.
\end{equation}
Thus $\lambda_{m+1}^{i0}C_{-1}^i=(C^m)_{-2}$ is the coefficient of $x^{-2}=(x)^i(x^3)^{\frac m3-i}$, since $m=2i-2$.
Then
$$
\lambda_{m+1}^{i0}=\binom{m/3}{i}3^i\ne 0,
$$
as desired.
\end{proof}

Thus we have proved the following proposition.
\begin{proposition}
If $(C_{-1},C_{-2},\dots,C_{-(m+1)})$ is a solution of the system~\eqref{eqGGV}, with  $C_{-1}\ne 0$ and $C_{-2}= 0$, then
\begin{itemize}
\item $m\equiv 1\mod 3$,
\item $\lambda_{m+1}^{i0}\ne 0$ for $i:=\frac{m+2}{2}$,
\item There are $i$ solutions of the system~\eqref{eqGGV} in $K[y^{1/i}]$, given by
$$
C_{-1} = \left(\frac{-y}{\lambda^{i0}_{m+1}} \right)^{\frac{1}{i}},\quad  C_{-2}=0 \quad\text{and}\quad
C_{-k} = - R_{k-2}(C_{-1},C_{-2}) \quad \text{for}\quad 3\le k \leq m+1.
$$
\end{itemize}
\end{proposition}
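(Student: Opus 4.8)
The plan is to mirror the treatment of the \textsc{second case} already begun above, simply interchanging the roles of the two homogeneous generators $C_{-1}$ and $C_{-2}$ relative to the \textsc{first case}. First I would substitute $C_{-2}=0$ into the homogeneous expansions~\eqref{sumas homogeneas 1} and~\eqref{sumas homogeneas 2}; this annihilates every monomial with $j>0$, leaving only pure powers of $C_{-1}$. Reading the reduced equation $\td E_{m+1}=0$, the unique surviving term is $\lambda_{m+1}^{i0}C_{-1}^{i}$ with $2i=m+2$, and since $y\ne 0$ this term must cancel $y$. Hence I obtain simultaneously the parity constraint $2\mid m+2$ (so $m$ is even and $i=\tfrac{m+2}{2}\in\mathds{N}$) and the nonvanishing $\lambda_{m+1}^{i0}\ne 0$, the latter being exactly the content of the preceding Lemma, which was deduced from the clean form $P=x^{3}+3xC_{-1}$ and Newton's binomial expansion. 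This divisibility $2\mid m+2$ is the even analogue of the condition $3\mid m+2$ isolated in the \textsc{first case}.

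Next I would dispose of the equation $\td E_{m}=0$. Since $m$ is even, $m+1$ is odd, so there is no integer $i$ with $2i=m+1$; once $C_{-2}=0$ the sum in~\eqref{sumas homogeneas 1} has no surviving monomial, and $\td E_{m}=0$ holds automatically. This is the precise analogue of the observation that the spurious condition $\lambda_m^{(m+1)/2,0}=0$ is vacuously satisfied. It then remains to solve the single equation $\lambda_{m+1}^{i0}C_{-1}^{i}=-y$ with $i=\tfrac{m+2}{2}$. Over $K=\mathds{C}$ this has exactly $i$ roots, namely $C_{-1}=\zeta^{k}\bigl(-y/\lambda_{m+1}^{i0}\bigr)^{1/i}$ for $k=0,\dots,i-1$, where $\zeta$ is a primitive $i$-th root of unity; all of them lie in $\mathds{C}[y^{1/i}]$ and are nonzero, consistent with the hypothesis $C_{-1}\ne 0$. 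For each such $C_{-1}$, with $C_{-2}=0$ fixed, the reduced Groebner relations $\td E_{k}=C_{-(k+2)}+R_{k}(C_{-1},C_{-2})$ force $C_{-k}=-R_{k-2}(C_{-1},0)$ for $3\le k\le m+1$, and the ideal equality $I=\la\td E_{1},\dots,\td E_{m+1}\ra$ guarantees that these tuples genuinely solve the full system~\eqref{eqGGV}. This produces the asserted $i$ solutions in $\mathds{C}[y^{1/i}]$.

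The step I expect to be most delicate is the bookkeeping that yields \emph{exactly} $i$ solutions together with their field of definition: one must verify that $-y/\lambda_{m+1}^{i0}$ is a nonzero element (so that its $i$ distinct $i$-th roots all exist in $\mathds{C}[y^{1/i}]$), invoke $\lambda_{m+1}^{i0}\ne 0$ from the Lemma, and confirm that each choice of $C_{-1}$ extends uniquely through the back-substitution. I would also take care with the arithmetic restriction itself, since the two equations~\eqref{sumas homogeneas 1}--\eqref{sumas homogeneas 2} directly force only the parity $2\mid m$: the binomial coefficient $\binom{m/3}{i}$, the sole place where the residue of $m$ modulo $3$ could enter, never vanishes under the standing hypothesis $3\nmid m$, so no additional congruence modulo $3$ is generated here. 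Pinning down the exact statement of the restriction on $m$ is therefore the point that most needs to be handled with care.
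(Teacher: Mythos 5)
Your proof follows essentially the same route as the paper: read off the unique surviving monomials of $\td{E}_m$ and $\td{E}_{m+1}$ after setting $C_{-2}=0$, deduce $2\mid m+2$ and $\lambda_{m+1}^{i0}\ne 0$ (the latter via $P=x^3+3xC_{-1}$ and Newton's binomial theorem, which is exactly the paper's preceding Lemma), note that $\td{E}_m=0$ is then vacuous, and back-substitute through the reduced Groebner basis to get the $i$ solutions. You are also right that the argument only yields that $m$ is even, not $m\equiv 1\pmod 3$; the first bullet of the proposition appears to be a slip carried over from the first case (where $3\mid m+2$ was derived) and should read $m\equiv 0\pmod 2$.
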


\noindent {\bf THIRD CASE}:  $C_{-1}\ne 0$, $C_{-2} \ne 0$ and $m$ even. \\
In this case we introduce a new auxiliary variable $t$ satisfying $C_{-2}^{2} = t C_{-1}^{3}$.
The equality~\eqref{sumas homogeneas 1} now reads
$$
\td{E}_{m} = \sum_{2i + 3j = m+1} \lambda_{m}^{ij} C_{-1}^{i} C_{-2}^{j}=
\sum_{2i + 6r+3 = m+1} \lambda_{m}^{i,2r+1} C_{-1}^{i} C_{-2}^{2r+1}=
\sum_{2i + 6r+2 = m} \lambda_{m}^{i,2r+1} C_{-1}^{i+3r} C_{-2} t^r,
$$
since $m$ even implies that the weight $2i+3j=m+1$ is odd, so $j$ is odd and can be written as $2r+1$.  Moreover, for the terms in the sum
we have
$i+3r = \frac{m-2}{2}$, and so we arrive at
$$
0 = C_{-1}^{\frac{m-2}{2}} C_{-2} \sum_{\substack{2i + 6r = m-2 \\ j = 2r +1}} \lambda^{ij}_{m} t^{r}.
$$
Thus $t$ is a root of the polynomial
\begin{equation}\label{definicion de f}
f(t)=\sum_{r=0}^{\lfloor \frac{m-2}{6} \rfloor} a_r t^r,\quad \text{where $a_r=\lambda_m^{\frac{m-2-6r}{2}, 2r+1}$.}
\end{equation}
 Let $\{t_1,\dots,t_s\}$ be the roots of the polynomial $f(t)$.
Note that in the equality~\eqref{sumas homogeneas 2} the power $j$ has to be even, since $m$ is even and $2i+3j=m+2$. Hence,
if we replace $C_{-2}^2$ by $t_l C_{-1}^3$ in~\eqref{sumas homogeneas 2}, we obtain
		   \begin{equation*}
		   	\td{E}_{m+1} = y+ \sum_{\substack{2i + 3j =m+2 \\ j = 2r}} \lambda_{m+1}^{ij} C_{-1}^{i}C_{-2}^{j} = y+ \sum_{2i+6r=m+2} \lambda_{m+1}^{i,2r} C_{-1}^{i+3r} t_{l}^{r}.
		   \end{equation*}
Note that for each of the terms in the last sum we have $i+3r=\frac{m+2}{2}$, and so
\begin{equation*}
		   	0 = y + C_{-1}^{\frac{m+2}{2}} g(t_l),\quad\text{where $g(t)=\sum_{r=0}^{\lfloor \frac{m+2}{6} \rfloor} b_r t^r$,}
		   \end{equation*}
 with $b_r=\lambda_{m+1}^{\frac{m+2-6r}{2},2r}$. It follows that		
		   \begin{equation*}
		   	C_{-1} = \Big(\frac{-y}{g(t_l)} \Big)^{\frac{2}{m+2}}.
		   \end{equation*}
		Thus we have arrived at the following result.
\begin{proposition}
If $(C_{-1},C_{-2},\dots,C_{-(m+1)})$ is a solution of the system~\eqref{eqGGV}, with  $C_{-1}\ne 0$, $C_{-2}\ne 0$ and $m$ even, then
the system has at most $s\cdot (m+2)$ solutions, where $s$ is the number of roots of $f(t)$ defined in~\eqref{definicion de f}.
Moreover, for every choice of a root $t_l$ of $f$, the solutions are given by
\begin{align*}
 C_{-1} &= \Big(\frac{-y}{g(t_l)} \Big)^{\frac{2}{m+2}} ,& \frac{m+2}{2}\quad \text{choices,}  \\
  C_{-2} &= \left( t_l C_{-1}^3\right)^{\frac 12} ,& 2\quad \text{choices,}\\
C_{-k} &= - R_{k-2}(C_{-1},C_{-2}) \quad \text{for}\quad 3\le k \leq m+1.
\end{align*}
\end{proposition}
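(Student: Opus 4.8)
The plan is to extract everything from the two reduced equations $\td{E}_m = R_m(C_{-1},C_{-2}) = 0$ and $\td{E}_{m+1} = y + R_{m+1}(C_{-1},C_{-2}) = 0$, which any solution of~\eqref{eqGGV} must satisfy because $I = \langle \td{E}_1,\dots,\td{E}_{m+1}\rangle$. First I would set $t := C_{-2}^2/C_{-1}^3$, which is legitimate since $C_{-1}\ne 0$, so that $C_{-2}^2 = t C_{-1}^3$. The computation recorded just above the statement then applies directly: since $m$ is even the weight $2i+3j = m+1$ is odd, forcing $j$ odd, and after substituting $C_{-2}^{2r} = t^r C_{-1}^{3r}$ and factoring out $C_{-1}^{(m-2)/2}C_{-2}$ the equation $\td{E}_m = 0$ becomes $C_{-1}^{(m-2)/2} C_{-2}\, f(t) = 0$. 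Because $C_{-1} C_{-2}\ne 0$, this is \emph{equivalent} to $f(t) = 0$, so $t$ must be one of the roots $t_1,\dots,t_s$ of $f$.

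Next I would feed the chosen root $t_l$ into $\td{E}_{m+1} = 0$. Here $m$ even makes $2i + 3j = m+2$ even, so $j$ is even; substituting $C_{-2}^{2r} = t_l^r C_{-1}^{3r}$ and factoring out $C_{-1}^{(m+2)/2}$ turns the equation into $y + C_{-1}^{(m+2)/2} g(t_l) = 0$. Since $y \ne 0$ this simultaneously shows that only roots with $g(t_l)\ne 0$ can occur and pins down $C_{-1}^{(m+2)/2} = -y/g(t_l)$, whence $C_{-1} = (-y/g(t_l))^{2/(m+2)}$ takes exactly $\frac{m+2}{2}$ admissible values. Given $C_{-1}$ and $t_l$, the relation $C_{-2}^2 = t_l C_{-1}^3$ determines $C_{-2}$ up to sign, giving $2$ values, and finally the reduced basis of $I_{m-1}$ forces $C_{-k} = -R_{k-2}(C_{-1},C_{-2})$ for $3 \le k \le m+1$. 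Each tuple so produced satisfies all generators $\td{E}_1,\dots,\td{E}_{m+1}$ and is thus a genuine solution of~\eqref{eqGGV}, while the equivalences above show conversely that every solution arises this way; this yields the displayed formulas.

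For the bound I would observe that the assignment sending a solution to the triple $(t_l, C_{-1}, C_{-2})$ is injective, since $t_l = C_{-2}^2/C_{-1}^3$ is recovered from the solution, and then multiply the choices at each stage: $s$ roots, then $\frac{m+2}{2}$ values of $C_{-1}$, then $2$ values of $C_{-2}$, for at most $s\cdot(m+2)$ solutions. The point requiring the most care — and the reason the statement reads \emph{at most} rather than \emph{exactly} — is that a root $t_l$ of $f$ with $g(t_l) = 0$ produces the impossible equation $y = 0$ and so contributes nothing; one must allow for such roots, with equality holding precisely when $g$ is nonvanishing on all roots of $f$. The only mechanical ingredient is re-deriving the two factorizations of $\td{E}_m$ and $\td{E}_{m+1}$ under the substitution, but these are exactly the displays preceding the proposition, so no further computation is needed.
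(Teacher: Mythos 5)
Your proposal is correct and follows essentially the same route as the paper: introduce $t$ with $C_{-2}^2 = tC_{-1}^3$, use the parity of $m$ to force $j$ odd in $\td{E}_m$ and even in $\td{E}_{m+1}$, factor out the powers of $C_{-1}$ and $C_{-2}$ to reduce to $f(t)=0$ and $y + C_{-1}^{(m+2)/2}g(t_l)=0$, and then count the choices. Your additional remark that roots $t_l$ with $g(t_l)=0$ contribute nothing, which is why the bound is only an upper bound, is a sensible clarification of a point the paper leaves implicit.
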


\noindent {\bf FOURTH CASE}:  $C_{-1}\ne 0$, $C_{-2} \ne 0$ and $m$ odd. \\
In this case we introduce a new auxiliary variable $t$ satisfying $C_{-2}^{2} = t C_{-1}^{3}$.
The equality~\eqref{sumas homogeneas 1} now reads
$$
\tilde{E}_{m} = \sum_{2i + 3j = m+1} \lambda_{m}^{ij} C_{-1}^{i} C_{-2}^{j}=
\sum_{2i + 6r = m+1} \lambda_{m}^{i,2r} C_{-1}^{i} C_{-2}^{2r}=
\sum_{2i + 6r = m+1} \lambda_{m}^{i,2r} C_{-1}^{i+3r} t^r,
$$
since $m$ odd implies that the weight $2i+3j=m+1$ is even, so $j$ is even and can be written as $2r$.  Moreover, for the terms in the sum
we have
$i+3r = \frac{m+1}{2}$, and so we arrive at
$$
0 = C_{-1}^{\frac{m+1}{2}} \sum_{\substack{2i + 6r = m+1 \\ j = 2r }} \lambda^{ij}_{m} t^{r}.
$$
Thus $t$ is a root of the polynomial
\begin{equation}\label{definicion de f 2}
f(t)=\sum_{r=0}^{\lfloor \frac{m+1}{6} \rfloor} a_r t^r,\quad \text{where $a_r=\lambda_m^{\frac{m+1-6r}{2}, 2r}$.}
\end{equation}
 Let $\{t_1,\dots,t_s\}$ be the roots of the polynomial $f(t)$.
Note that in the equality~\eqref{sumas homogeneas 2} the power $j$ has to be odd, since $m$ is odd and $2i+3j=m+2$. Hence,
if we replace $C_{-2}^2$ by $t_l C_{-1}^3$ in~\eqref{sumas homogeneas 2}, we obtain
		   \begin{equation*}
		   	\td{E}_{m+1} = y+ \sum_{\substack{2i + 3j =m+2 \\ j = 2r+1}} \lambda_{m+1}^{ij} C_{-1}^{i}C_{-2}^{j} = y+ \sum_{2i+6r+3=m+2} \lambda_{m+1}^{i,2r+1} C_{-1}^{i+3r} C_{-2} t_{l}^{r}.
		   \end{equation*}
Note that for each of the terms in the last sum we have $i+3r=\frac{m-1}{2}$, and so
\begin{equation*}
		   	0 = y + C_{-1}^{\frac{m-1}{2}} C_{-2} g(t_l),\quad\text{where $g(t)=\sum_{r=0}^{\lfloor \frac{m-1}{6} \rfloor} b_r t^r$,}
		   \end{equation*}
 with $b_r=\lambda_{m+1}^{\frac{m-1-6r}{2},2r+1}$. We also replace $C_{-2}$ by $\left(t_l C_{-1}^3\right)^{\frac 12}$. It follows that		
		   \begin{equation*}
0= y+ C_{-1}^{\frac{m+2}{2}} (t_{l})^{\frac 12} g(t_l),
		   \end{equation*}
and so
$$
C_{-1}=\Big(\frac{-y}{(t_{l})^{\frac 12} g(t_l)}  \Big)^{\frac{2}{m+2}}.
$$
Thus we have arrived at the following result.
\begin{proposition}
If $(C_{-1},C_{-2},\dots,C_{-(m+1)})$ is a solution of the system~\eqref{eqGGV}, with  $C_{-1}\ne 0$, $C_{-2}\ne 0$ and $m$ odd, then
the system has at most $2\cdot s\cdot (m+2)$ solutions, where $s$ is the number of roots of $f(t)$ defined in~\eqref{definicion de f 2}.
Moreover, for every choice of a root $t_l$ of $f$, we first choose a square root of $t_l$ and then the solutions are given by
\begin{align*}
 C_{-1} &= \Big(\frac{-y}{(t_{l})^{\frac 12} g(t_l)}  \Big)^{\frac{2}{m+2}} ,& \frac{m+2}{2}\quad \text{choices,}  \\
  C_{-2} &= \left( t_l C_{-1}^3\right)^{\frac 12} ,& 2\quad \text{choices,}\\
C_{-k} &= - R_{k-2}(C_{-1},C_{-2}) \quad \text{for}\quad 3\le k \leq m+1.
\end{align*}
\end{proposition}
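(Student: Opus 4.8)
The plan is first to invoke the Groebner-basis description established above. Since $I=\langle \tilde E_1,\dots,\tilde E_{m+1}\rangle$ and $\tilde E_k=C_{-(k+2)}+R_k(C_{-1},C_{-2})$ for $k\le m-1$, every solution of~\eqref{eqGGV} is completely determined by the pair $(C_{-1},C_{-2})$ via $C_{-k}=-R_{k-2}(C_{-1},C_{-2})$, and the only surviving constraints are $\tilde E_m=0$ and $\tilde E_{m+1}=0$, i.e.~\eqref{sumas homogeneas 1} and~\eqref{sumas homogeneas 2}. Thus the whole problem reduces to counting pairs $(C_{-1},C_{-2})$ with $C_{-1}\ne 0$ and $C_{-2}\ne 0$ that solve these two homogeneous equations when $m$ is odd.

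For such a pair I would set $t=C_{-2}^2/C_{-1}^3$, which is legitimate because $C_{-1}\ne 0$. The parity computation displayed before the statement gives $\tilde E_m=C_{-1}^{(m+1)/2}f(t)$, so $C_{-1}\ne 0$ forces $f(t)=0$; hence $t$ is one of the roots $t_1,\dots,t_s$ of $f$, and necessarily $t\ne 0$ since $C_{-2}\ne 0$. Conversely, to confirm that each such root yields genuine solutions I would read the reduction backwards: the relation $C_{-2}^2=t_lC_{-1}^3$ makes $\tilde E_m=0$ automatic from $f(t_l)=0$, while substituting $C_{-2}=(t_lC_{-1}^3)^{1/2}$ into $\tilde E_{m+1}=0$ gives $y+t_l^{1/2}g(t_l)C_{-1}^{(m+2)/2}=0$, which is exactly the equation solved by the stated formula for $C_{-1}$ (note $g(t_l)\ne 0$, as otherwise $y=0$). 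So the formulas do produce solutions, and every solution in this case arises this way.

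It remains to count. Fixing a root $t_l$ and a square root $t_l^{1/2}$, I would parametrize the curve $C_{-2}^2=t_lC_{-1}^3$ by $u\mapsto (C_{-1},C_{-2})=(u^2,\,t_l^{1/2}u^3)$, a bijection off the origin; under it $\tilde E_{m+1}=0$ collapses to $u^{m+2}=-y/(t_l^{1/2}g(t_l))$, which has exactly $m+2$ solutions $u$ once an ambient field containing the required radicals of $y$ (for instance the Puiseux series in $y$) has been fixed. This yields the bound: at most $s$ roots, two choices of $t_l^{1/2}$, and $m+2$ values of $u$ give at most $2s(m+2)$ tuples. The main care-point, and the reason the statement only claims ``at most'', is the interlocking of the radicals $t_l^{1/2}$, the square root defining $C_{-2}$, and the $(m+2)$-th root defining $C_{-1}$: because $m+2$ is odd, the pairs $(t_l^{1/2},u)$ and $(-t_l^{1/2},-u)$ produce the very same point, so the naive product over-counts by a factor two and the genuine count is $s(m+2)$. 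Fixing the ambient field so that every fractional power is single-valued, and tracking this identification, is the part of the argument requiring the most attention.
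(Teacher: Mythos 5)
Your proposal is correct and follows essentially the same route as the paper: reduce via the Groebner basis to the two equations \eqref{sumas homogeneas 1}--\eqref{sumas homogeneas 2}, substitute $t=C_{-2}^2/C_{-1}^3$, and use the parity of $m$ to factor $\td{E}_m$ and $\td{E}_{m+1}$. Your additional observation that the triples $(t_l^{1/2},u)$ and $(-t_l^{1/2},-u)$ yield the same point, so that the true count is $s(m+2)$ rather than $2s(m+2)$, is a correct sharpening of the paper's ``at most'' bound and does not conflict with the statement.
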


\begin{bibdiv}
\begin{biblist}

\bib{CLO}{book}{
   author={Cox, David},
   author={Little, John},
   author={O'Shea, Donal},
   title={Ideals, varieties, and algorithms},
   series={Undergraduate Texts in Mathematics},
   edition={3},
   note={An introduction to computational algebraic geometry and commutative
   algebra},
   publisher={Springer, New York},
   date={2007},
   pages={xvi+551},
   isbn={978-0-387-35650-1},
   isbn={0-387-35650-9},
   doi={10.1007/978-0-387-35651-8},
}
	
\bib{GGV}{article}{
author={Guccione, Jorge Alberto},
author={Guccione, Juan Jos\'e},
author={Valqui, Christian},
   title={A system of polynomial equations related to the Jacobian conjecture},
   journal={arXiv:1406.0886v1 [math.AG]},
   date={3 June 2014},
   }

\end{biblist}
\end{bibdiv}

\end{document}